\documentclass[12pt]{article}
\usepackage[english]{babel}
\usepackage[all]{xy}
\usepackage{fancyhdr}
\usepackage{setspace, amsmath, amsthm, amssymb, amsfonts, amscd, epic, graphicx, ulem, dsfont}
\usepackage{amsthm}
\usepackage[T1]{fontenc}
\usepackage{cases}
\newtheorem{theorem}{Theorem}

\newtheorem{corollary}[theorem]{Corollary}
\newtheorem{example}[theorem]{Example}

\newtheorem{remark}[theorem]{Remark}

\makeatletter

\@addtoreset{equation}{section}
\makeatother

\title{On the generalized of $p$-biharmonic and bi-$p$-harmonic maps}

\author{Fethi Latti \footnote{Salhi Ahmed Naama University Center, Department of mathematics, Naama 45000, Algeria. Email: 
etafati@hotmail.fr
} and Ahmed Mohammed Cherif\footnote{University Mustapha Stambouli Mascara, Faculty of Exact Sciences, Mascara 29000, Algeria. Email: med\_cherif\_ahmed@yahoo.fr}}
\date{}

\begin{document}
\maketitle
	
\begin{abstract}
In this note, we extend the definition of $p$-biharmonic and bi-$p$-harmonic
maps between two Riemannian manifolds and explore some of their properties.
\begin{flushleft}
\textit{Keywords:}  $p$-biharmonic maps, bi-$p$-harmonic maps, Liouville-type theorems.\\
\textit{Mathematics Subject Classification 2020:} 53A45, 53C20, 58E20.
\end{flushleft}
\end{abstract}

\section{Introduction}
$p$-Harmonic maps $\varphi:(M,g)\to(N,h)$ between Riemannian manifolds are the critical points of the $p$-energy functional
\begin{equation}\label{eq1.1A}
E_p(\varphi;D)=\frac{1}{p}\int_{D} |d\varphi|^pv_{g},
\end{equation}
for any compact domain $D\subset M$ for some constant $p\geq2$. The corresponding Euler-Lagrange equation of (\ref{eq1.1A}) is giving by the vanishing of the $p$-tension field
\begin{equation}\label{eq1.1B}
\tau_{p}(\varphi)=\operatorname{div}(|d\varphi|^{p-2}d\varphi).
\end{equation}
A smooth map $\varphi$ is $p$-harmonic if and only if
$\tau_{p}(\varphi)=0$ (for more details on the concept of $p$-harmonic maps see \cite{BG,BI,ali}). The $p$-harmonic maps are a natural generalization of the harmonic maps (see \cite{baird, ES}).\\
The study of $p$-biharmonic maps extends the classical theory of harmonic and biharmonic maps  (see \cite{Jiang, YX}).
A $p$-biharmonic map $\varphi:(M,g)\to(N,h)$ between Riemannian manifolds is a critical point of the $p$-biharmonic energy functional (see \cite{CL2,HL}) defined by
\begin{equation}\label{eq1.1}
	E_{2,p}(\varphi;D)=\frac{1}{p}\,\int_{D}|\tau(\varphi)|^{p}\,v_{g},
\end{equation}
where $\tau(\varphi)$ is the tension field of $\varphi$ giving by $\tau(\varphi)=\operatorname{trace}\nabla d\varphi$.
In \cite{cherif2}, the author introduced the notion of bi-$p$-harmonic maps as follows.
Consider a smooth map $\varphi:(M,g)\rightarrow (N,h)$, the bi-$p$-energy of $\varphi$ is defined by
	\begin{equation}\label{eq1.2}
	E_{p,2}(\varphi;D)=\frac{1}{2}\,\int_{D}|\tau_{p}(\varphi)|^{2}\,v_{g}.
	\end{equation}
The map $\varphi$ is called bi-$p$-harmonic if it is a critical point of the bi-$p$-energy functional (\ref{eq1.2}) for any compact domain $D\subset M$.\\
In this paper, we further extend the definition of $p$-biharmonic and bi-$p$-harmonic maps by introducing
$(p,q)$-harmonic maps. A map $\varphi:(M,g)\to(N,h)$ between Riemannian manifolds is called $(p,q)$-harmonic if it is a critical point of the $(p,q)$-energy functional
\begin{equation}\label{eq1.3}
E_{p,q}(\varphi;D)=\frac{1}{q}\int_{D} |\tau_{p}(\varphi)|^qv_{g},
\end{equation}
for any compact domain $D\subset M$ for some constants $p,q\geq2$. Note that $(2,2)$-harmonic maps are biharmonic maps.
In \cite{cherif1,cherif3}, the author studies Liouville-type theorems for generalized $p$-harmonic maps.
In \cite{Ara}, the author studies the geometry of \( F\)-harmonic maps, where \( F \in C^\infty(\mathbb{R}) \) is a positive function. This study extends the framework of \( p \)-harmonic maps by considering higher-order energy functionals influenced by the function \( F \).
We compute the first variation of the $(p,q)$-energy functional, and establish Liouville-type theorems for $(p,q)$-harmonic maps. Additionally, we construct new examples of proper $(p,q)$-harmonic maps, that is $(p,q)$-harmonic non $p$-harmonic maps. These concepts generalize various energy functionals in geometric analysis and have applications in the study of nonlinear differential equations.


\section{First variation of the $(p,q)$-energy functional}

The following theorem provides an explicit expression for the first variation of \( E_{p,q} \), and introduces the associated \((p,q)\)-tension field \( \tau_{p,q}(\varphi) \), whose vanishing characterizes critical points.

\begin{theorem}\label{th01}
  Let $\varphi$ be a smooth map from Riemannian manifold $(M,g)$ to Riemannian manifold $(N,h)$, and $\{\varphi_{t}\}_{t\in(-\epsilon,\epsilon)}$ a smooth variation of $\varphi$ to support in compact domain $D\subset M$. Then
\begin{equation}\label{equa01}
  \frac{d}{dt}E_{p,q}(\varphi_{t};D)\Big|_{t=0}=-\int_{D}h\big(v,\tau_{p,q}(\varphi)\big)v_{g},
\end{equation}
where $\tau_{p,q}(\varphi)$ is the $(p,q)$-tension field of $\varphi$ given by
\begin{eqnarray*}
   \tau_{p,q}(\varphi)&=&-|d\varphi|^{p-2}|\tau_{p}(\varphi)|^{q-2}\operatorname{trace}R^{N}\big(\tau_{p}(\varphi),d\varphi\big)d\varphi\\
   &&- \operatorname{trace}\nabla^{\varphi}|d\varphi|^{p-2} \nabla^{\varphi}|\tau_{p}(\varphi)|^{q-2}\tau_{p}(\varphi)\\
   &&-(p-2)\operatorname{trace}\nabla^{\varphi}|d\varphi|^{p-4}
   \left\langle\nabla^{\varphi}|\tau_{p}(\varphi)|^{q-2}\tau_{p}(\varphi),d\varphi\right\rangle d\varphi,
\end{eqnarray*}
and $v=\frac{d\varphi_{t}}{dt}|_{t=0}$ denotes the variation vector field of $\{\varphi_{t}\}_{t\in(-\epsilon,\epsilon)}$.
\end{theorem}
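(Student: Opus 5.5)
We view the variation as a single map $\Phi:(-\epsilon,\epsilon)\times M\to N$, $\Phi(t,x)=\varphi_t(x)$, and work with the pull-back connection $\nabla^{\Phi}$ on $\Phi^{-1}TN$. We fix a local orthonormal frame $\{e_i\}$ on $M$ that is geodesic at the point under consideration, so that $\nabla^M_{e_i}e_j=0$ there and $[\partial_t,e_i]=0$. Differentiating under the integral sign gives
\[
\frac{d}{dt}E_{p,q}(\varphi_t;D)=\int_D |\tau_p(\varphi_t)|^{q-2}\,h\big(\nabla^{\Phi}_{\partial_t}\tau_p(\varphi_t),\tau_p(\varphi_t)\big)v_g .
\]
We set $\sigma=|\tau_p(\varphi)|^{q-2}\tau_p(\varphi)$. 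The whole computation then reduces to finding $\nabla_{\partial_t}\tau_p(\varphi_t)$ at $t=0$ and transferring all derivatives from $v$ onto $\sigma$ by integration by parts. No boundary terms appear, because $v$ and its derivatives vanish on $\partial D$.

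We write $\tau_p(\varphi_t)=\sum_i\big(\nabla^{\Phi}_{e_i}(|d\varphi_t|^{p-2}d\Phi(e_i))-|d\varphi_t|^{p-2}d\Phi(\nabla_{e_i}e_i)\big)$ and commute $\nabla_{\partial_t}$ past $\nabla_{e_i}$. This uses the curvature identity $\nabla_{\partial_t}\nabla_{e_i}X=\nabla_{e_i}\nabla_{\partial_t}X+R^N(d\Phi(\partial_t),d\Phi(e_i))X$ together with torsion-freeness, $\nabla_{\partial_t}d\Phi(e_i)=\nabla_{e_i}d\Phi(\partial_t)$. At $t=0$ this yields three contributions:
\[
\sum_i R^N(v,d\varphi(e_i))|d\varphi|^{p-2}d\varphi(e_i),\qquad
\operatorname{div}\big(|d\varphi|^{p-2}\nabla^{\varphi}v\big),\qquad
\operatorname{div}\big(\partial_t|d\varphi_t|^{p-2}\,d\varphi\big),
\]
where $\partial_t|d\varphi_t|^{p-2}=(p-2)|d\varphi|^{p-4}\langle\nabla^{\varphi}v,d\varphi\rangle$ and $\langle\nabla^{\varphi}v,d\varphi\rangle=\sum_j h(\nabla_{e_j}v,d\varphi(e_j))$.

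Next we pair each term with $\sigma$ and integrate.
\begin{itemize}
\item \textbf{Curvature term.} The symmetries of $R^N$ give $h(R^N(v,d\varphi(e_i))d\varphi(e_i),\sigma)=h(R^N(\sigma,d\varphi(e_i))d\varphi(e_i),v)$. Summing over $i$ produces $|d\varphi|^{p-2}|\tau_p|^{q-2}h(\operatorname{trace}R^N(\tau_p,d\varphi)d\varphi,v)$. Its sign must be matched against the paper's convention for $R^N$.
\item \textbf{Second term.} Two applications of the divergence theorem, moving $\nabla_{e_i}$ off $v$ and onto $\sigma$, give $\int_D h\big(v,\operatorname{trace}\nabla^{\varphi}|d\varphi|^{p-2}\nabla^{\varphi}\sigma\big)v_g$.
\item \textbf{Third term.} One integration by parts gives $-(p-2)\int_D|d\varphi|^{p-4}\langle\nabla^{\varphi}v,d\varphi\rangle\langle\nabla^{\varphi}\sigma,d\varphi\rangle v_g$. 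The factor $\langle\nabla^{\varphi}v,d\varphi\rangle$ is $\sum_j e_j\,h(v,d\varphi(e_j))-h(v,\tau(\varphi))$ in divergence form. A second integration by parts therefore gives $(p-2)\int_D h\big(v,\operatorname{trace}\nabla^{\varphi}\,|d\varphi|^{p-4}\langle\nabla^{\varphi}\sigma,d\varphi\rangle d\varphi\big)v_g$, where the trace is read as $\sum_j\nabla_{e_j}\big(f\,d\varphi(e_j)\big)$ with $f=|d\varphi|^{p-4}\langle\nabla^{\varphi}\sigma,d\varphi\rangle$.
\end{itemize}
Collecting the three terms and comparing with $-\int_D h(v,\tau_{p,q}(\varphi))v_g$ gives the stated formula.

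The main obstacle is bookkeeping rather than ideas. Each contribution must come out with a consistent sign, and in particular the sign of the curvature term depends on the paper's convention for $R^N$. The third term also requires the double integration by parts to produce precisely the operator $\operatorname{trace}\nabla^{\varphi}(\cdots)d\varphi$. Throughout we keep the frame geodesic at a point, so the $\nabla_{e_i}e_i$ corrections vanish pointwise. We then check global validity by recognising each integrand as $\operatorname{div}$ of a compactly supported vector field on $D$.
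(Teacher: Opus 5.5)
Your proposal is correct and follows essentially the same route as the paper: you introduce $\phi(t,x)=\varphi_t(x)$ with a frame geodesic at the point, commute $\nabla_{\partial_t}$ past $\nabla_{e_i}$ via $R^N$, use $\partial_t|d\varphi_t|^{p-2}=(p-2)|d\varphi|^{p-4}\langle\nabla^{\varphi}v,d\varphi\rangle$, and move all derivatives off $v$ by the divergence theorem (the paper only differs in bookkeeping: it integrates by parts once before expanding $\nabla_{\partial_t}(|d\varphi_t|^{p-2}d\phi(e_i))$ and packages the remaining boundary terms as the $1$-forms $\eta_1,\dots,\eta_4$). Your hedge on the curvature sign is unnecessary, because the commutation identity you wrote already fixes $R^N(X,Y)=\nabla_X\nabla_Y-\nabla_Y\nabla_X-\nabla_{[X,Y]}$; this is the convention the paper uses, and with it the pair symmetry gives exactly the stated curvature term.
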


\begin{proof}
Let $\phi$ be a smooth map defined by
\begin{eqnarray*}
   \phi:(-\epsilon,\epsilon)\times M&\rightarrow& N. \\
  (t,x)\quad &\mapsto&\phi(t,x)=\varphi_{t}(x)
\end{eqnarray*}
Let $\{e_{i}\}$ be an orthonormal frame with respect to $g$ on $M$ such that $\nabla_{e_{j}}^{M}e_{j}=0$ at $x\in M$ for all $i,j=1, ...,m$.\\ Note that $\phi(x,0)=\varphi(x)$, and the variation vector field $v$ associated to the variation $\{\varphi_{t}\}_{t\in(-\epsilon,\epsilon)}$ is given by $v=d\phi(\frac{\partial}{\partial t})|_{t=0}$. We have
\begin{equation}\label{equa005}
  \frac{d}{dt}E_{p,q}(\varphi_{t};D)=\frac{1}{q}\int_{D}\frac{\partial}{\partial t}|\tau_{p}(\varphi)|^{q}v^{g}.
\end{equation}
We compute the following term
\begin{eqnarray}\label{eq2.3}
  \frac{\partial}{\partial t}|\tau_{p}(\varphi_{t})|^{q}
   &=&\nonumber \frac{\partial}{ \partial t}\left[|\tau_{p}(\varphi_{t})|^{2}\right]^{\frac{q}{2}} \\
   &=&\nonumber \frac{\partial}{ \partial t}\left[h(\tau_{p}(\varphi_{t}),\tau_{p}(\varphi_{t}))\right]^{\frac{q}{2}} \\
   &=&\nonumber \frac{q}{2}h(\tau_{p}(\varphi_{t}),\tau_{p}(\varphi_{t}))^{\frac{q}{2}-1}\frac{\partial}{\partial t}h(\tau_{p}(\varphi_{t}),\tau_{p}(\varphi_{t})) \\
   &=&q h(\tau_{p}(\varphi_{t}),\tau_{p}(\varphi_{t}))^{\frac{q-2}{2}}h( \nabla_{\frac{\partial}{\partial t}}^{\phi} \tau_{p}(\varphi_{t}),\tau_{p}(\varphi_{t})).
\end{eqnarray}
Take $E_i=(0,e_i)$ for all $i=1,...,m$. As $\nabla_{E_{i}}E_{i}=0$ at $x$, we get
\begin{eqnarray*}
  \nabla_{\frac{\partial}{\partial t}}^{\phi} \tau_{p}(\varphi_{t})
   &=& \nabla_{\frac{\partial}{\partial t}}^{\phi}\Big[\nabla_{E_{i}}^{\phi}|d\varphi_{t}|^{p-2}d\phi(E_{i})
       -|d\varphi_{t}|^{p-2}d\phi(\nabla_{E_{i}}E_{i})\Big]  \\
   &=& \nabla_{\frac{\partial}{\partial t}}^{\phi}\nabla_{E_{i}}^{\phi}|d\varphi_{t}|^{p-2}d\phi(E_{i}).
\end{eqnarray*}
From the definition of the curvature tensor of $(N,h)$, we obtain
\begin{eqnarray}\label{eq2.4}
  \nabla_{\frac{\partial}{\partial t}}^{\phi} \tau_{p}(\varphi_{t})
   &=&\nonumber R^{N}\big(d\phi(\frac{\partial}{\partial t}),d\phi(E_{i})\big)|d\varphi_{t}|^{p-2}d\phi(E_{i})
      +\nabla_{E_{i}}^{\phi}\nabla_{\frac{\partial}{\partial t}}^{\phi}|d\varphi_{t}|^{p-2}d\phi(E_{i}) \\
   && + \nabla^\phi_{[\frac{\partial}{\partial t},E_{i}]}|d\varphi_{t}|^{p-2}d\phi(E_{i})\\
   &=&\nonumber |d\varphi_{t}|^{p-2} R^{N}\big(d\phi(\frac{\partial}{\partial t}),d\phi(E_{i})\big)d\phi(E_{i})
      +\nabla_{E_{i}}^{\phi}\nabla_{\frac{\partial}{\partial t}}^{\phi}|d\varphi_{t}|^{p-2}d\phi(E_{i}).
\end{eqnarray}
By substituting (\ref{eq2.4}) in (\ref{eq2.3}), we find that
\begin{eqnarray}\label{equa03}
   \frac{1}{q}\frac{\partial}{\partial t}|\tau_{p}(\varphi_{t})|^{q}
   &=& |d\varphi_{t}|^{p-2} |\tau_{p}(\varphi_{t})|^{q-2}h\big(R^{N}\big(d\phi(\frac{\partial}{\partial t}),d\phi(E_{i})\big)d\phi(E_{i}),\tau_{p}(\varphi_{t}) \big)\nonumber \\
   && + E_{i}\Big[|\tau_{p}(\varphi_{t})|^{q-2}h(\nabla_{\frac{\partial}{\partial t}}^{\phi}|d\varphi_{t}|^{p-2}d\phi(E_{i}),\tau_{p}(\varphi_{t}))\Big]\nonumber\\
   &&-h(\nabla_{\frac{\partial}{\partial t}}^{\phi}|d\varphi_{t}|^{p-2}d\phi(E_{i}),\nabla_{E_{i}}^{\phi}|\tau_{p}(\varphi_{t})|^{q-2}\tau_{p}(\varphi_{t})).
\end{eqnarray}
By using the property $\nabla^{\phi}_{X}d\phi(Y)=\nabla^{\phi}_{Y}d\phi(X)+d\phi([X,Y])$, we get
\begin{eqnarray}\label{eq2.6}
\nabla_{\frac{\partial}{\partial t}}^{\phi}|d\varphi_t|^{p-2}d\phi(E_{i})\Big|_{t=0}
&=& \nabla_{\frac{\partial}{\partial t}}^{\phi}d\phi(|d\varphi_t|^{p-2}E_{i})\Big|_{t=0}\\
&=&\nonumber |d\varphi|^{p-2}\nabla^\varphi_{e_{i}}v+\frac{\partial}{\partial t}|d\varphi_t|^{p-2}\Big|_{t=0}d\varphi(e_i)\\
&=&\nonumber |d\varphi|^{p-2}\nabla^\varphi_{e_{i}}v
+(p-2)|d\varphi|^{p-4}h(\nabla^\varphi_{e_{j}}v,d\varphi(e_{j}))d\varphi(e_{i}).
\end{eqnarray}
By substituting (\ref{eq2.6}) in (\ref{equa03}), we obtain
\begin{eqnarray}\label{eq2.7}
   \frac{1}{q}\frac{\partial}{\partial t}|\tau_{p}(\varphi_{t})|^{q}\Big|_{t=0}
   &=&\nonumber |d\varphi|^{p-2} |\tau_{p}(\varphi)|^{q-2}h\big(R^{N}\big(v,d\varphi(e_{i})\big)d\varphi(e_{i}),\tau_{p}(\varphi) \big) \\
   &&\nonumber + e_{i}\Big[|d\varphi|^{p-2}|\tau_{p}(\varphi)|^{q-2}h(\nabla^\varphi_{e_{i}}v,\tau_{p}(\varphi))\Big]\\
   &&\nonumber + (p-2) e_{i}\Big[|d\varphi|^{p-4}|\tau_{p}(\varphi)|^{q-2}h(\nabla^\varphi_{e_{j}}v,d\varphi(e_{j}))h(d\varphi(e_{i}),\tau_{p}(\varphi))\Big]\nonumber\\
   &&\nonumber-|d\varphi|^{p-2}h(\nabla^\varphi_{e_{i}}v,\nabla_{e_{i}}^{\varphi}|\tau_{p}(\varphi)|^{q-2}\tau_{p}(\varphi_{t}))\\
   &&\nonumber-(p-2)|d\varphi|^{p-4}h(\nabla^\varphi_{e_{j}}v,d\varphi(e_{j}))
   h(d\varphi(e_{i}),\nabla_{e_{i}}^{\varphi}|\tau_{p}(\varphi)|^{q-2}\tau_{p}(\varphi)).
\end{eqnarray}
Let $\eta_{1},\eta_{2},\eta_{3},\eta_{4}\in \Gamma(T^{*}M)$ defined by
\begin{eqnarray*}
\eta_{1}(X)&=&|d\varphi|^{p-2}|\tau_p(\varphi)|^{q-2}h\big(\nabla_{X}^{\varphi}v,\tau_{p}(\varphi) \big),\\
\eta_{2}(X)&=&|d\varphi|^{p-4}|\tau_p(\varphi)|^{q-2}\left\langle\nabla^{\varphi}v,d\varphi\right\rangle h\big(d\varphi(X),\tau_{p}(\varphi)\big),\\
\eta_{3}(X)&=&|d\varphi|^{p-2}h\big(v,\nabla_{X}^{\varphi}|\tau_{p}(\varphi)|^{q-2}\tau_{p}(\varphi) \big),\\
\eta_{4}(X)&=&|d\varphi|^{p-4}
\left\langle\nabla^{\varphi}|\tau_{p}(\varphi)|^{q-2}\tau_{p}(\varphi),d\varphi\right\rangle h\big(v,d\varphi(X)\big).
\end{eqnarray*}
Therefore (\ref{eq2.7}) becomes
\begin{eqnarray*}
   \frac{1}{q}\frac{\partial}{\partial t}|\tau_{p}(\varphi_{t})|^{q}\Big|_{t=0}
   &=&|d\varphi|^{p-2}|\tau_{p}(\varphi)|^{q-2}h\big(R^{N}\big(\tau_{p}(\varphi),d\varphi(e_{i}) \big)d\varphi(e_{i}),v \big)\\
   &&+ \operatorname{div}\eta_{1}+(p-2)\operatorname{div}\eta_{2}-\operatorname{div}\eta_{3}-(p-2)\operatorname{div}\eta_{4} \\
   &&+ h\big(v,\nabla_{e_{i}}^{\varphi}|d\varphi|^{p-2}\nabla_{e_{i}}^{\varphi}|\tau_{p}(\varphi)|^{q-2}\tau_{p}(\varphi)\big)  \\
   && +(p-2)h\big(v,\nabla_{e_{j}}^{\varphi}|d\varphi|^{p-4}
   \left\langle\nabla^{\varphi}|\tau_{p}(\varphi)|^{q-2}\tau_{p}(\varphi),d\varphi\right\rangle d\varphi(e_{j}) \big).
\end{eqnarray*}
The Theorem \ref{th01} follows from the divergence Theorem.
\end{proof}
According to Theorem \ref{th01}, we deduce the following Corollaries.

\begin{corollary}
  A smooth map $\varphi:(M,g)\rightarrow(N,h)$ between two Riemannian manifolds is $(p,q)$-harmonic if and only if
\begin{eqnarray*}
   \tau_{p,q}(\varphi)&=&-|d\varphi|^{p-2}|\tau_{p}(\varphi)|^{q-2}\operatorname{trace}R^{N}\big(\tau_{p}(\varphi),d\varphi\big)d\varphi\\
   &&- \operatorname{trace}\nabla^{\varphi}|d\varphi|^{p-2} \nabla^{\varphi}|\tau_{p}(\varphi)|^{q-2}\tau_{p}(\varphi)\\
   &&-(p-2)\operatorname{trace}\nabla^{\varphi}|d\varphi|^{p-4}
   \left\langle\nabla^{\varphi}|\tau_{p}(\varphi)|^{q-2}\tau_{p}(\varphi),d\varphi\right\rangle d\varphi=0.
\end{eqnarray*}
\end{corollary}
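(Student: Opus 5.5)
The corollary follows from Theorem \ref{th01} together with the fundamental lemma of the calculus of variations. By definition, $\varphi$ is $(p,q)$-harmonic when, for every compact domain $D\subset M$ and every smooth variation $\{\varphi_t\}$ supported in $D$, we have $\frac{d}{dt}E_{p,q}(\varphi_t;D)|_{t=0}=0$. By (\ref{equa01}) this is equivalent to
\[
\int_D h\big(v,\tau_{p,q}(\varphi)\big)\,v_g=0
\]
for every variation vector field $v$ supported in $D$. The proof therefore splits into two directions.

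For the ``if'' direction: if $\tau_{p,q}(\varphi)=0$, the integrand in (\ref{equa01}) vanishes identically. So the first variation is zero for every $D$ and every variation, and $\varphi$ is $(p,q)$-harmonic.

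For the ``only if'' direction, I would first observe that every smooth section $v\in\Gamma(\varphi^{-1}TN)$ with compact support in $D$ is realized as the variation vector field of some variation. Take $\varphi_t(x)=\exp^N_{\varphi(x)}(t\,v(x))$. This is smooth for small $|t|$ because $v$ has compact support, it equals $\varphi$ outside $D$, and it satisfies $\frac{d\varphi_t}{dt}|_{t=0}=v$.

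Next, suppose $\tau_{p,q}(\varphi)(x_0)\neq0$ at some point $x_0$. Choose a bump function $\rho\geq0$ with compact support in a small ball $D$ around $x_0$, with $\rho(x_0)>0$, and set $v=\rho\,\tau_{p,q}(\varphi)$. Then (\ref{equa01}) gives
\[
0=\int_D \rho\,|\tau_{p,q}(\varphi)|^2\,v_g.
\]
The integrand is nonnegative and continuous, and it is strictly positive near $x_0$, which is a contradiction. Hence $\tau_{p,q}(\varphi)\equiv0$.

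The only delicate point is smoothness of $\tau_{p,q}(\varphi)$. Powers such as $|d\varphi|^{p-4}$ and $|\tau_p(\varphi)|^{q-2}$ may fail to be smooth where $d\varphi$ or $\tau_p(\varphi)$ vanishes. As in Theorem \ref{th01}, I would tacitly assume enough regularity (for instance $d\varphi\neq0$, or $p,q$ large) so that $\tau_{p,q}(\varphi)$ is continuous. Under that assumption the bump-function argument above goes through.
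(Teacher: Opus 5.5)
Your argument is correct and is essentially the paper's. The paper simply deduces this corollary from the first variation formula (\ref{equa01}) of Theorem \ref{th01}; you have spelled out the standard fundamental-lemma step it leaves implicit, namely realizing compactly supported sections as variation fields via $\exp^N$ and then testing with a bump function.

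One small tightening: if $\tau_{p,q}(\varphi)$ is only continuous, then $v=\rho\,\tau_{p,q}(\varphi)$ does not produce a smooth variation. In that case test instead with $v=\rho\,w$, where $w$ is a smooth local section with $w(x_0)=\tau_{p,q}(\varphi)(x_0)$ and $\rho$ is supported in a neighbourhood of $x_0$ on which $h(w,\tau_{p,q}(\varphi))>0$.
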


\begin{corollary}\cite{cherif2}
  A smooth map $\varphi:(M,g)\rightarrow(N,h)$ between two Riemannian manifolds is bi-$p$-harmonic if and only if
  \begin{eqnarray*}
   \tau_{p,2}(\varphi)&=&-|d\varphi|^{p-2}\operatorname{trace}R^{N}\big(\tau_{p}(\varphi),d\varphi\big)d\varphi\\
   &&- \operatorname{trace}\nabla^{\varphi}|d\varphi|^{p-2} \nabla^{\varphi}\tau_{p}(\varphi)\\
   &&-(p-2)\operatorname{trace}\nabla^{\varphi}|d\varphi|^{p-4}
   \left\langle\nabla^{\varphi}\tau_{p}(\varphi),d\varphi\right\rangle d\varphi=0.
\end{eqnarray*}
\end{corollary}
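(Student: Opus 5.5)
This corollary is the special case $q=2$ of the preceding corollary, which itself follows from Theorem \ref{th01}. I will simply specialize and check that the functional and the tension field agree with those of \cite{cherif2}.

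First, I will observe that for $q=2$ the $(p,q)$-energy (\ref{eq1.3}) reduces to
\[
E_{p,2}(\varphi;D)=\frac{1}{2}\int_D|\tau_p(\varphi)|^2v_g,
\]
which is exactly the bi-$p$-energy (\ref{eq1.2}). Hence bi-$p$-harmonic maps are precisely the $(p,2)$-harmonic maps.

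Second, I will apply Theorem \ref{th01} with $q=2$. For any compactly supported variation,
\[
\frac{d}{dt}E_{p,2}(\varphi_t;D)\big|_{t=0}=-\int_D h(v,\tau_{p,2}(\varphi))v_g .
\]
Setting $q=2$ in the formula for $\tau_{p,q}(\varphi)$, every factor $|\tau_p(\varphi)|^{q-2}$ becomes $1$. The three terms then read exactly as displayed: the curvature term $-|d\varphi|^{p-2}\operatorname{trace}R^N(\tau_p(\varphi),d\varphi)d\varphi$, the term $-\operatorname{trace}\nabla^\varphi|d\varphi|^{p-2}\nabla^\varphi\tau_p(\varphi)$, and the $(p-2)$-term with $\langle\nabla^\varphi\tau_p(\varphi),d\varphi\rangle$.

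Finally, I will conclude by the standard fundamental-lemma argument. Since $v$ can be taken to be any smooth section of $\varphi^{-1}TN$ with compact support in $D$ (for instance $\varphi_t(x)=\exp_{\varphi(x)}(tv(x))$), the vanishing of the first variation for all such $v$ and all $D$ is equivalent to $\tau_{p,2}(\varphi)=0$. The converse direction is immediate from the same formula.

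There is no real obstacle here. The only point needing care is the substitution $q=2$, since $|\tau_p(\varphi)|^{q-2}$ is singular where $\tau_p(\varphi)=0$ when $q<2$. Because $q=2$ gives exactly the constant $1$, no regularity issue arises at zeros of $\tau_p(\varphi)$.
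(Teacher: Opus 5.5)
Your proposal is correct and follows the paper's route exactly: the paper obtains this corollary directly from Theorem~\ref{th01}, noting that $E_{p,2}$ coincides with the bi-$p$-energy (\ref{eq1.2}) and setting $q=2$, so every factor $|\tau_p(\varphi)|^{q-2}$ becomes $1$. Your fundamental-lemma step, which realizes any compactly supported $v$ via $\exp_{\varphi(x)}(tv(x))$, only makes explicit what the paper leaves implicit.
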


\begin{corollary}\cite{CL2,HL}
  A smooth map $\varphi:(M,g)\rightarrow(N,h)$ between two Riemannian manifolds is $p$-biharmonic if and only if
  \begin{eqnarray*}
   \tau_{2,p}(\varphi)
   &=&-|\tau(\varphi)|^{p-2}\operatorname{trace}R^{N}\big(\tau(\varphi),d\varphi\big)d\varphi\\
    &&  - \operatorname{trace}(\nabla^{\varphi})^2|\tau(\varphi)|^{p-2}\tau(\varphi)=0.
\end{eqnarray*}
\end{corollary}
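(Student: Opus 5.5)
The statement to prove is the last corollary, the $p$-biharmonic case. Here $E_{2,p}(\varphi;D)=\frac{1}{p}\int_D|\tau(\varphi)|^p v_g$ is exactly the $(p,q)$-energy functional (\ref{eq1.3}) with the parameters relabelled. The inner exponent is $2$, so $\tau_2(\varphi)=\operatorname{div}(d\varphi)=\tau(\varphi)$, and the outer exponent is $p$. The plan is therefore to specialize Theorem \ref{th01}, i.e.\ the first corollary, taking inner exponent $2$ and outer exponent $p$ in place of $(p,q)$.

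\textbf{Step 1: substitute.} With inner exponent $2$, the factor $|d\varphi|^{p-2}$ of Theorem \ref{th01} becomes $|d\varphi|^{0}=1$. The coefficient $(p-2)$ in front of the third term becomes $2-2=0$, so the whole term involving $\langle\nabla^\varphi\cdot,d\varphi\rangle d\varphi$ disappears. This cancellation is a formal consequence of the formula and needs no further work. With outer exponent $p$, the factor $|\tau_p(\varphi)|^{q-2}$ becomes $|\tau(\varphi)|^{p-2}$.

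\textbf{Step 2: simplify the surviving terms.} The curvature term becomes $-|\tau(\varphi)|^{p-2}\operatorname{trace}R^N(\tau(\varphi),d\varphi)d\varphi$. The second term becomes $-\operatorname{trace}\nabla^\varphi\nabla^\varphi\big(|\tau(\varphi)|^{p-2}\tau(\varphi)\big)$. This is precisely $-\operatorname{trace}(\nabla^\varphi)^2|\tau(\varphi)|^{p-2}\tau(\varphi)$ in the notation of the computation in Theorem \ref{th01}, which uses a frame with $\nabla_{e_i}e_i=0$ at the point.

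\textbf{Step 3: conclude.} By formula (\ref{equa01}) the first variation equals $-\int_D h(v,\tau_{2,p}(\varphi))v_g$ for every compactly supported variation. Since $v$ is an arbitrary compactly supported vector field along $\varphi$, criticality is equivalent to $\tau_{2,p}(\varphi)=0$.

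The only point requiring care is the legitimacy of setting the inner exponent to $2$ in the derivation of Theorem \ref{th01}. I would check that step (\ref{eq2.6}) involves the derivative of $|d\varphi_t|^{p-2}$, which is identically zero when the exponent is $0$. The $(p-2)$-terms and the forms $\eta_2,\eta_4$ then vanish identically, and no singularity from $|d\varphi|^{p-4}$ arises because it always carries the prefactor $(p-2)$.

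One caveat: the factor $|\tau(\varphi)|^{p-2}$ may fail to be smooth where $\tau(\varphi)=0$. The paper implicitly assumes enough regularity for the computation, and I would adopt the same convention rather than address it.
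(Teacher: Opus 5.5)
Your proposal is correct and takes the same route as the paper, which obtains this corollary simply by specializing Theorem \ref{th01} to $(p,q)=(2,p)$: then $\tau_2(\varphi)=\tau(\varphi)$, $|d\varphi|^{0}=1$, and the $(p-2)$-term drops out. Your check that the $|d\varphi|^{p-4}$ terms always carry the vanishing factor $(p-2)$ is a sensible detail that the paper leaves implicit.
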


\begin{remark}
 $(1)$ Any $p$-harmonic map is $(p,q)$-harmonic maps, i.e., $\tau_{p}(\varphi)=0$ implies $\tau_{p,q}(\varphi)=0$.\\
 $(2)$ Let $\varphi:(M,g)\rightarrow(N,h)$ be a smooth map. If the pull-back vector field $|\tau_{p}(\varphi)|^{q-2}\tau_{p}(\varphi)$ is parallel along $\varphi$ (in the case $N=\mathbb{R}^n$ we assume that the components of $|\tau_{p}(\varphi)|^{q-2}\tau_{p}(\varphi)$ are constants), then the map $\varphi$ is $(p,q)$-harmonic.
 Note that we have construction many examples of proper $(p,q)$-harmonic, that is $(p,q)$-harmonic non $p$-harmonic map. \\ $(3)$ Every bi-$p$-harmonic map $\varphi:(M,g)\to (N,h)$ with parallel $p$-tension field $\tau_{p}(\varphi)$ is $(p,q)$-harmonic.
\end{remark}

To deepen the understanding of \((p,q)\)-harmonic maps, it is of particular interest to examine explicit examples that satisfy the corresponding Euler--Lagrange equations. In certain cases, bi-\(p\)-harmonic maps may also satisfy the \((p,q)\)-harmonic condition. The following example illustrates such a case and highlights the interplay between these two notions.

\begin{example}
Let $M$ the manifold $\mathbb{R}^2\backslash\{(0,0)\}\times\mathbb{R}$ equipped with the Riemannian
metric $g=(x^2+y^2)^{-\frac{1}{p}}(dx^2+dy^2+dz^2)$ and let $N=\mathbb{R}^2$ equipped with the Riemannian
metric $h=du^2+dv^2$. The bi-$p$-harmonic map $\varphi:(M,g)\to \mathbb{R}^2$ defined by
$\varphi(x,y,z)=(\sqrt{x^2+y^2},z)$ (see \cite{cherif2, MM2}) is also $(p,q)$-harmonic for all $p,q\geq2$, because the components of $|\tau_{p}(\varphi)|^{q-2}\tau_{p}(\varphi)$ are constants. A straightforward computation yields
$$|\tau_{p}(\varphi)|^{q-2}\tau_{p}(\varphi)=2^{\frac{(p-2)(q-1)}{2}}\left(2-\frac{3}{p}\right)^{q-1}\frac{\partial}{\partial u}.$$
\end{example}

The following corollary establishes a useful equivalence under an assumption. This condition, appears naturally in several geometric contexts and simplifies the characterization of critical points of the \((p,q)\)-energy functional.

\begin{corollary}
  Let $\varphi:(M,g)\rightarrow(N,h)$ be a smooth map between two Riemannian manifolds. We assume that $|\tau_{p}(\varphi)|^{q-2}$ is constant for some $p,q\geq2$. Then, $\varphi$ is $(p,q)$-harmonic if and only if it is
  bi-$p$-harmonic.
\end{corollary}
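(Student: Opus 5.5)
We will compare the $(p,q)$-tension field of Theorem \ref{th01} with the bi-$p$-tension field $\tau_{p,2}(\varphi)$ from the corollary quoted from \cite{cherif2}. Set $c=|\tau_p(\varphi)|^{q-2}$, a constant on $M$ by hypothesis. The first step is to note that a constant scalar commutes with the pull-back connection: for every vector field $X$ on $M$,
$$\nabla^{\varphi}_X\big(c\,\tau_p(\varphi)\big)=c\,\nabla^{\varphi}_X\tau_p(\varphi).$$
In particular $\left\langle\nabla^{\varphi}|\tau_{p}(\varphi)|^{q-2}\tau_{p}(\varphi),d\varphi\right\rangle=c\left\langle\nabla^{\varphi}\tau_{p}(\varphi),d\varphi\right\rangle$.

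Next we substitute this into each of the three terms of $\tau_{p,q}(\varphi)$. In the curvature term the factor $|\tau_p(\varphi)|^{q-2}$ is already outside and equals $c$. In the second and third terms the constant $c$ can be pulled through the outer operators $\operatorname{trace}\nabla^{\varphi}|d\varphi|^{p-2}(\cdot)$ and $\operatorname{trace}\nabla^{\varphi}|d\varphi|^{p-4}\langle\cdot,d\varphi\rangle d\varphi$, since these are linear over $\mathbb{R}$. This gives the identity
$$\tau_{p,q}(\varphi)=c\,\tau_{p,2}(\varphi).$$

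Finally we split into two cases. If $c\neq0$, then $\tau_{p,q}(\varphi)=0$ if and only if $\tau_{p,2}(\varphi)=0$, and the two characterizing corollaries give the equivalence. If $c=0$, then $q>2$ forces $\tau_p(\varphi)\equiv0$, while $q=2$ forces $c=1$, which falls under the previous case. So $\varphi$ is $p$-harmonic, and by Remark (1), applied with exponents $(p,q)$ and with $(p,2)$, it is both $(p,q)$-harmonic and bi-$p$-harmonic, so the equivalence again holds.

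The main obstacle is this degenerate case $c=0$ together with the convention $0^{q-2}$. It has to be handled separately so that dividing by $c$ is never needed. Everything else is direct linearity.
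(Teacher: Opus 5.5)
Your proposal is correct and follows the same route the paper intends; the paper gives no written proof and treats the corollary as immediate from Theorem~\ref{th01}. With $c=|\tau_p(\varphi)|^{q-2}$ constant, linearity gives $\tau_{p,q}(\varphi)=c\,\tau_{p,2}(\varphi)$, and your separate treatment of $c=0$ (which forces $q>2$ and $\tau_p(\varphi)\equiv0$, so both conditions hold by Remark~(1)) correctly fills in a detail the paper leaves unstated.
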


In the next example, we demonstrate the existence of a proper $(p,q)$-harmonic maps from hyperbolic space.

\begin{example}
The following map from $4$-dimensional hyperbolic space into $4$-dimensional Euclidean space
\begin{eqnarray*}
  \varphi:(\mathbb{H}^4,x_4^{-2/p}(dx_1^2+...+dx_4^2)) &\to& (\mathbb{\mathbb{R}}^4,dy_1^2+...+dy_4^2), \\
   (x_1,...,x_4)&\longmapsto&(x_1,...,x_4)
\end{eqnarray*}
is $(p,q)$-harmonic and bi-$p$-harmonic, where $p>4$ and $q\geq2$. From a simple calculation, we get
\begin{eqnarray*}
  |\tau_{p}(\varphi)|^{q-2} &=& 2^{(p-2)(q-2)} p^{2-q}(p-4)^{q-2},\\
   |\tau_{p}(\varphi)|^{q-2}\tau_{p}(\varphi)&=&2^{(p-2)(q-1)} p^{1-q}(p-4)^{q-1}\frac{\partial}{\partial y_4}.
\end{eqnarray*}
\end{example}

\begin{remark}
There exist non bi-$p$-harmonic non $p$-biharmonic $(p,q)$-harmonic map $\varphi$ such that $|\tau_{p}(\varphi)|^{q-2}\tau_{p}(\varphi)$ is non-parallel along $\varphi$.
\end{remark}

\begin{example}
Let $s\in\mathbb{R}\backslash\{1\}$. For all $p,q\geq2$, the smooth map
\begin{eqnarray*}
   \varphi:(\mathbb{R}^2,dx^2+dy^2)&\to&(\mathbb{R}^2,du^2+dv^2), \\
   (x,y)&\mapsto&(x^{s},0)
\end{eqnarray*}
is $(p,q)$-harmonic if and only if $s=\frac{p}{p-1}$ or $s=\frac{pq-1}{pq-q-1}$.
Indeed; A straightforward computation yields
\begin{eqnarray}\label{eq28}
 |d\varphi|&=&\nonumber sx^{s-1},\\
 \tau_{p}(\varphi)  &=&\nonumber s^{p-1}(ps-p-s+1)x^{ps-p-s}\frac{\partial}{\partial u}, \\
 |\tau_{p}(\varphi)|^{q-2}  &=&  \left[s^{p-1}(ps-p-s+1)x^{ps-p-s}\right]^{q-2},\\
 \left\langle\nabla^{\varphi}|\tau_{p}(\varphi)|^{q-2}\tau_{p}(\varphi),d\varphi\right\rangle
 &=&\nonumber(q-1)(ps-p-s)(ps-p-s+1)^{q-1}\\
 & &\nonumber\quad s^{(p-1)(q-1)+1}x^{(q-1)(ps-p-s)+s-2}.
\end{eqnarray}
Since $R^{\mathbb{R}^2}=0$ and $d\varphi(\frac{\partial}{\partial y})=0$, the $(p,q)$-tension field of $\varphi$ is given by
\begin{eqnarray}\label{eq29}
   \tau_{p,q}(\varphi)
   &=&\nonumber-\nabla^{\varphi}_{\frac{\partial}{\partial x}}|d\varphi|^{p-2} \nabla^{\varphi}_{\frac{\partial}{\partial x}}|\tau_{p}(\varphi)|^{q-2}\tau_{p}(\varphi)\\
    &&-\nabla^{\varphi}_{\frac{\partial}{\partial y}}|d\varphi|^{p-2} \nabla^{\varphi}_{\frac{\partial}{\partial y}}|\tau_{p}(\varphi)|^{q-2}\tau_{p}(\varphi)\\
   &&\nonumber-(p-2)\nabla^{\varphi}_{\frac{\partial}{\partial x}}|d\varphi|^{p-4}
   \left\langle\nabla^{\varphi}|\tau_{p}(\varphi)|^{q-2}\tau_{p}(\varphi),d\varphi\right\rangle d\varphi(\frac{\partial}{\partial x}).
\end{eqnarray}
By substituting formulas (\ref{eq28}) in (\ref{eq29}), we obtain
\begin{eqnarray*}
   \tau_{p,q}(\varphi)
   &=&-(p-1)(q-1)(ps-p-s)(pqs-pq-qs-s+1)\\
    & &(ps-p-s+1)^{q-1}s^{p-2+(p-1)(q-1)}x^{pqs-pq-qs-s}\frac{\partial}{\partial u}.
\end{eqnarray*}
\end{example}


\section{Liouville-type theorems for $(p,q)$-harmonic maps}

Liouville-type theorems play a central role in the study of harmonic and \(p\)-harmonic maps, often establishing rigidity results under geometric assumptions. A classical example is the result that any harmonic map from a compact Riemannian manifold into a Riemannian manifold with non-positive sectional curvature is constant map. The following theorem provides an affirmative answer in the case where the domain is compact and the target manifold has non-positive sectional curvature. It shows that under these classical geometric assumptions, the \((p,q)\)-harmonicity condition reduces to \(p\)-harmonicity.

\begin{theorem}\label{compact-case}
  Let $(M,g)$ be a compact orientable Riemannian manifold without boundary, and $(N,h)$ a Riemannian manifold with non-positive sectional curvature.
  Then, every $(p,q)$-harmonic map from $(M,g)$ to $(N,h)$ is $p$-harmonic.
\end{theorem}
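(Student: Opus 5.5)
The plan is to test the equation $\tau_{p,q}(\varphi)=0$ against the vector field
$$W=|\tau_{p}(\varphi)|^{q-2}\tau_{p}(\varphi)\in\Gamma(\varphi^{-1}TN)$$
and integrate over $M$. Since $M$ is compact without boundary, the divergence theorem lets us move all derivatives onto $W$. The aim is to show that every resulting term has a sign, which forces $|d\varphi|^{p-2}\nabla^{\varphi}W=0$. A second integration by parts, using the definition $\tau_p(\varphi)=\operatorname{div}(|d\varphi|^{p-2}d\varphi)$, will then give $\int_M|\tau_p(\varphi)|^q v_g=0$. As in Theorem \ref{th01}, we use a local orthonormal frame $\{e_i\}$ with $\nabla^M_{e_i}e_i=0$ at the point, and take the quantities $|d\varphi|^{p-4}$ and $|\tau_p(\varphi)|^{q-2}$ to be smooth, as the paper does.

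\emph{Step 1: the three terms of $\tau_{p,q}(\varphi)$.} Write $0=\int_M h(\tau_{p,q}(\varphi),W)\,v_g$ and treat the three terms of the formula in Theorem \ref{th01} separately.

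The curvature term contributes
$$-|d\varphi|^{p-2}|\tau_p(\varphi)|^{2q-4}\sum_i h\big(R^N(\tau_p(\varphi),d\varphi(e_i))d\varphi(e_i),\tau_p(\varphi)\big).$$
With the curvature convention implicit in the proof of Theorem \ref{th01}, each summand is a sectional curvature times $|\tau_p\wedge d\varphi(e_i)|^2$, hence $\le 0$, so the whole term is $\ge 0$.

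For the second term, apply the divergence theorem to the $1$-form $X\mapsto |d\varphi|^{p-2}h(\nabla^{\varphi}_XW,W)$. This gives
$$-\int_M h\big(\operatorname{trace}\nabla^{\varphi}|d\varphi|^{p-2}\nabla^{\varphi}W,\,W\big)v_g=\int_M|d\varphi|^{p-2}|\nabla^{\varphi}W|^2v_g.$$

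For the third term, apply it to the $1$-form $X\mapsto|d\varphi|^{p-4}\langle\nabla^{\varphi}W,d\varphi\rangle\, h(d\varphi(X),W)$. This gives
$$(p-2)\int_M|d\varphi|^{p-4}\langle\nabla^{\varphi}W,d\varphi\rangle^2v_g.$$

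\emph{Step 2: vanishing of $|d\varphi|^{p-2}\nabla^\varphi W$.} The three contributions are nonnegative and sum to zero, because $p\ge2$ and $N$ has non-positive sectional curvature. Each therefore vanishes, and in particular $|d\varphi|^{p-2}|\nabla^{\varphi}W|^2\equiv0$. Hence $|d\varphi|^{p-2}\nabla^{\varphi}_XW=0$ everywhere: where $d\varphi\neq0$ this says $\nabla^\varphi W=0$, and where $d\varphi=0$ the factor $|d\varphi|^{p-2}$ already kills it when $p>2$. The case $p=2$ is the classical biharmonic argument, where the factor is $1$.

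\emph{Step 3: conclusion.} Consider the $1$-form $\eta(X)=|d\varphi|^{p-2}h(d\varphi(X),W)$. Its divergence is
$$\operatorname{div}\eta=h(\tau_p(\varphi),W)+|d\varphi|^{p-2}\langle d\varphi,\nabla^\varphi W\rangle=|\tau_p(\varphi)|^q,$$
using Step 2 for the second term. Integrating over $M$ gives $\int_M|\tau_p(\varphi)|^qv_g=0$, so $\tau_p(\varphi)=0$ and $\varphi$ is $p$-harmonic.

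The main obstacle is Step 1. One must keep the signs consistent with the curvature convention used in Theorem \ref{th01}, so that the curvature term really has the right sign. One must also verify that the two integrations by parts produce exactly the squares shown, with no leftover cross terms. The regularity of $|d\varphi|^{p-4}$ and $|\tau_p(\varphi)|^{q-2}$ at zeros of $d\varphi$ and $\tau_p(\varphi)$ has to be assumed, as it implicitly is throughout the paper.
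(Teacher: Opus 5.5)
Your proposal is correct and follows the paper's proof essentially step for step. You pair $\tau_{p,q}(\varphi)=0$ with $W=|\tau_p(\varphi)|^{q-2}\tau_p(\varphi)$ and integrate by parts via the same two $1$-forms ($\theta_1,\theta_2$ in the paper), using $\operatorname{Sect}^N\le 0$ and $p\ge 2$. You then apply the divergence theorem to $\theta_3(X)=h(W,|d\varphi|^{p-2}d\varphi(X))$ to get $\int_M|\tau_p(\varphi)|^q v_g=0$. Your Step 2 is slightly more careful than the paper: the paper asserts $\nabla^\varphi W=0$ everywhere, which for $p>2$ only follows where $d\varphi\neq 0$, whereas $|d\varphi|^{p-2}\nabla^\varphi W=0$, which you prove, is all the last step needs.
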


\begin{proof}
  Let $\varphi:(M,g)\rightarrow(N,h)$ be a smooth $(p,q)$-harmonic map. We have
\begin{eqnarray*}
   0&=&-|\tau_{p}(\varphi)|^{q-2}|d\varphi|^{p-2}\operatorname{trace}R^{N}\big(\tau_{p}(\varphi),d\varphi\big)d\varphi\\
   &&- \operatorname{trace}\nabla^{\varphi}|d\varphi|^{p-2} \nabla^{\varphi}|\tau_{p}(\varphi)|^{q-2}\tau_{p}(\varphi)  \\
   && -(p-2)\operatorname{trace}\nabla^{\varphi}|d\varphi|^{p-4}
   \left\langle\nabla^{\varphi}|\tau_{p}(\varphi)|^{q-2}\tau_{p}(\varphi),d\varphi\right\rangle d\varphi. \end{eqnarray*}
Fix a point $x\in M$, and let $\{e_{i}\}$ be an orthonormal frame with respect to $g$ on $M$ such that $\nabla_{e_{i}}^{M}e_{j}=0$ at $x\in M$ for all $i,j=1, ...,m$. At $x$, we get\\

$|\tau_{p}(\varphi)|^{2(q-2)}|d\varphi|^{p-2}h\big(R^{N}(\tau_{p}(\varphi),d\varphi(e_{i}))d\varphi(e_{i}),\tau_{p}(\varphi)\big)$
\begin{eqnarray}\label{equa015}
   &=&- h\big(\nabla_{e_{i}}^{\varphi}|d\varphi|^{p-2} \nabla_{e_{i}}^{\varphi}|\tau_{p}(\varphi)|^{q-2}\tau_{p}(\varphi),|\tau_{p}(\varphi)|^{q-2}\tau_{p}(\varphi)\big) \\
   &&\nonumber -(p-2)h\big(\nabla_{e_{i}}^{\varphi}|d\varphi|^{p-4}
   \left\langle\nabla^{\varphi}|\tau_{p}(\varphi)|^{q-2}\tau_{p}(\varphi),d\varphi\right\rangle d\varphi(e_{i}),|\tau_{p}(\varphi)|^{q-2}\tau_{p}(\varphi)\big).
\end{eqnarray}
Let $\theta_{1},\theta_{2}\in\Gamma(T^{*}M)$ defined by
\begin{eqnarray*}
  \theta_{1}(X) &=& |d\varphi|^{p-2}h\big(\nabla_{X}^{\varphi}|\tau_{p}(\varphi)|^{q-2}\tau_{p}(\varphi),|\tau_{p}(\varphi)|^{q-2}\tau_{p}(\varphi)\big), \\
  \theta_{2}(X) &=&(p-2)|d\varphi|^{p-4}\left\langle\nabla^{\varphi}|\tau_{p}(\varphi)|^{q-2}\tau_{p}(\varphi),d\varphi\right\rangle
  h\big(d\varphi(X),|\tau_{p}(\varphi)|^{q-2}\tau_{p}(\varphi)\big),
\end{eqnarray*}
for all $X\in \Gamma(TM)$. Equation (\ref{equa015}) becomes\\

$|\tau_{p}(\varphi)|^{2(q-2)}|d\varphi|^{p-2}h\big(R^{N}(\tau_{p}(\varphi),d\varphi(e_{i}))d\varphi(e_{i}),\tau_{p}(\varphi)\big)$
\begin{eqnarray}\label{eq3.2}
   &=&\nonumber -\operatorname{div}\theta_{1}
   +|d\varphi|^{p-2}h\big( \nabla_{e_{i}}^{\varphi}|\tau_{p}(\varphi)|^{q-2}\tau_{p}(\varphi),
   \nabla_{e_{i}}^{\varphi}|\tau_{p}(\varphi)|^{q-2}\tau_{p}(\varphi)\big)-\operatorname{div}\theta_{2}\\
   &&\nonumber+(p-2)|d\varphi|^{p-4}
   \left\langle\nabla^{\varphi}|\tau_{p}(\varphi)|^{q-2}\tau_{p}(\varphi),d\varphi\right\rangle h\big(d\varphi(e_{i}),\nabla_{e_{i}}^{\varphi}|\tau_{p}(\varphi)|^{q-2}\tau_{p}(\varphi)\big).\\
\end{eqnarray}
From (\ref{eq3.2}) and assumption $\operatorname{Sect}^{N}\leq0$, we conclude that
\begin{eqnarray}\label{eq3.3}
   0&\geq&\nonumber -\operatorname{div}\theta_{1}
   +|d\varphi|^{p-2}\big|\nabla_{}^{\varphi}|\tau_{p}(\varphi)|^{q-2}\tau_{p}(\varphi)\big|^2
   -\operatorname{div}\theta_{2}\\
   &&\nonumber+(p-2)|d\varphi|^{p-4}
   \left\langle\nabla^{\varphi}|\tau_{p}(\varphi)|^{q-2}\tau_{p}(\varphi),d\varphi\right\rangle^2.
\end{eqnarray}
By using the Green Theorem (see \cite{baird}) and inequality (\ref{eq3.3}), we deduce
\begin{equation}\label{eq3.4}
\nabla_{X}^{\varphi}|\tau_{p}(\varphi)|^{q-2}\tau_{p}(\varphi)=0,\quad\forall X\in\Gamma(TM).
\end{equation}
Let $\theta_3(X)=h(|\tau_{p}(\varphi)|^{q-2}\tau_{p}(\varphi),|d\varphi|^{p-2}d\varphi(X))$. By using
(\ref{eq3.4}) and the definition of $\tau_{p}(\varphi)$, we find that
\begin{equation}\label{eq3.5}
    \operatorname{div}\theta_{3}=|\tau_{p}(\varphi)|^{q}.
\end{equation}
The Theorem \ref{compact-case} follows from (\ref{eq3.5}) and Green Theorem.
\end{proof}


For the case of the non-compact Riemannian manifold, we obtain the following result.

\begin{theorem}
  Let $(M,g)$ be a complete non-compact Riemannian manifold, and $(N,h)$ a Riemannian manifold with non-positive sectional curvature. We assume that
$$\int_{M}|d\varphi|^{p-2}|\tau_{p}(\varphi)|^{2(q-1)}v^g<\infty,
\quad\int_{M}|d\varphi|^{p-2}v^g=\infty.$$
Then, every $(p,q)$-harmonic map from $(M,g)$ to $(N,h)$ is $p$-harmonic.
\end{theorem}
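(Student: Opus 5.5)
We follow the proof of Theorem \ref{compact-case}, replacing the divergence theorem by a cut-off argument in the style of Gaffney/Yau. Put $\sigma=|\tau_{p}(\varphi)|^{q-2}\tau_{p}(\varphi)$, fix a point $x_0\in M$, and let $\rho$ be a cut-off function with $\rho=1$ on $B(x_0,r)$, $\rho=0$ off $B(x_0,2r)$, $0\le\rho\le1$ and $|\nabla\rho|\le 2/r$. Completeness of $M$ is what makes such $\rho$ exist with compact support for every $r$.

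\textbf{Step 1: $\sigma$ is parallel.} Take the inner product of $\tau_{p,q}(\varphi)=0$ with $\rho^2\sigma$ and integrate over $M$. Integrating the two divergence-type terms by parts (as with $\theta_1,\theta_2$) and using $\operatorname{Sect}^N\le0$ to discard the curvature term gives
\begin{eqnarray*}
&&\int_M\rho^2\Big(|d\varphi|^{p-2}|\nabla^\varphi\sigma|^2+(p-2)|d\varphi|^{p-4}\langle\nabla^\varphi\sigma,d\varphi\rangle^2\Big)v^g\\
&&\quad\le -2\int_M\rho\,|d\varphi|^{p-2}h(\nabla^\varphi_{\nabla\rho}\sigma,\sigma)v^g\\
&&\qquad-2(p-2)\int_M\rho\,|d\varphi|^{p-4}\langle\nabla^\varphi\sigma,d\varphi\rangle\,h(d\varphi(\nabla\rho),\sigma)v^g.
\end{eqnarray*}
Now apply Cauchy--Schwarz and Young's inequality to each term on the right. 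For the second term use $|h(d\varphi(\nabla\rho),\sigma)|\le|d\varphi||\nabla\rho||\sigma|$, so that the factor $|d\varphi|^{p-4}\langle\cdot\rangle$ pairs with $|d\varphi|^{p-2}$. Absorbing half of each term into the left side leaves a right-hand side bounded by
$$C\int_M|\nabla\rho|^2|d\varphi|^{p-2}|\sigma|^2v^g\le\frac{C'}{r^2}\int_M|d\varphi|^{p-2}|\tau_p(\varphi)|^{2(q-1)}v^g,$$
since $|\sigma|^2=|\tau_p(\varphi)|^{2(q-1)}$. By the first hypothesis this tends to $0$ as $r\to\infty$. Hence $|d\varphi|^{p-2}|\nabla^\varphi\sigma|^2=0$ everywhere.

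\textbf{Step 2: removing the weight.} On the open set where $d\varphi\ne0$ we get $\nabla^\varphi\sigma=0$. On the interior of $\{d\varphi=0\}$ the map is locally constant, so $\tau_p(\varphi)=0$ there and $\sigma=0$. By continuity of $\nabla^\varphi\sigma$ we conclude $\nabla^\varphi\sigma=0$ on all of $M$. For $p>2$ the set where $d\varphi$ vanishes may need care, and I would handle it with this density/continuity argument. Since $\sigma$ is parallel, $|\sigma|$, and therefore $|\tau_p(\varphi)|$, is a constant $c$.

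\textbf{Step 3: $c=0$.} As in the compact case, let $\theta_3(X)=h(\sigma,|d\varphi|^{p-2}d\varphi(X))$; then $\operatorname{div}\theta_3=|\tau_p(\varphi)|^q=c^q$. Integrate $\rho\,\operatorname{div}\theta_3$ and move the derivative onto $\rho$:
\begin{eqnarray*}
c^q\int_M\rho\,v^g&\le&\int_M|\nabla\rho|\,|\sigma|\,|d\varphi|^{p-1}v^g\\
&\le&\frac{2}{r}\Big(\int_{B_{2r}}|d\varphi|^{p-2}|\sigma|^2v^g\Big)^{1/2}\Big(\int_{B_{2r}}|d\varphi|^{p}v^g\Big)^{1/2}.
\end{eqnarray*}
This step is the main obstacle, since the remaining factor $\int|d\varphi|^p$ is not controlled by the hypotheses. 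My first attempt is instead to use the constancy of $|\sigma|$ directly. If $c\ne0$, then the first hypothesis reads
$$c^{2(q-1)}\int_M|d\varphi|^{p-2}v^g<\infty,$$
which contradicts $\int_M|d\varphi|^{p-2}v^g=\infty$. Hence $c=0$, so $\tau_p(\varphi)=0$ and $\varphi$ is $p$-harmonic. This appears to be precisely why the second integrability assumption is imposed, and it avoids the integration in Step 3 altogether.
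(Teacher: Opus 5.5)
Your proof is correct and follows the paper's argument. You test the equation against $\rho^2\sigma$ with a cut-off, use Young's inequality to bound $\int_{B_r}|d\varphi|^{p-2}|\nabla^\varphi\sigma|^2v^g$ by $\frac{C}{r^2}\int_M|d\varphi|^{p-2}|\tau_p(\varphi)|^{2(q-1)}v^g$, deduce that $\sigma$ is parallel and $|\tau_p(\varphi)|$ is constant, and force $c=0$ from $\int_M|d\varphi|^{p-2}v^g=\infty$; the $\theta_3$ detour in Step 3 is unnecessary, as you noticed. Your Step 2 density argument, on $\{d\varphi\neq0\}$ together with the interior of $\{d\varphi=0\}$, also supplies a justification the paper omits when it passes directly from the weighted estimate to $\nabla^\varphi\sigma=0$.
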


\begin{proof}
  Let $\varphi:(M,g)\rightarrow(N,h)$ be a smooth $(p,q)$-biharmonic map, $\{e_{i}\}$ an orthonormal frame with respect to $g$ on $M$ such that $\nabla_{e_{i}}^{M}e_{j}$ at $x\in M$ for all $i,j=1, ...,m$, and $\rho$ a smooth function with compact support on $M$. Since the sectional curvature of $(N,h)$ is non-positive, from (\ref{equa015}) we obtain at $x$
\begin{eqnarray}\label{eq3.6}
   0&\geq&- h\big(\nabla_{e_{i}}^{\varphi}|d\varphi|^{p-2} \nabla_{e_{i}}^{\varphi}|\tau_{p}(\varphi)|^{q-2}\tau_{p}(\varphi),\rho^2|\tau_{p}(\varphi)|^{q-2}\tau_{p}(\varphi)\big) \\
   &&\nonumber -(p-2)h\big(\nabla_{e_{i}}^{\varphi}|d\varphi|^{p-4}
   \left\langle\nabla^{\varphi}|\tau_{p}(\varphi)|^{q-2}\tau_{p}(\varphi),d\varphi\right\rangle d\varphi(e_{i}),\rho^2|\tau_{p}(\varphi)|^{q-2}\tau_{p}(\varphi)\big).
\end{eqnarray}
Let $\beta_{1},\beta_{2}\in\Gamma(T^{*}M)$ defined by
\begin{eqnarray*}
\beta_{1}(X)&=&\rho^{2}|d\varphi|^{p-2}|\tau_{p}(\varphi)|^{q-2}h\big( \nabla_{X}^{\varphi}|\tau_{p}(\varphi)|^{q-2}\tau_{p}(\varphi),\tau_{p}(\varphi)\big),\\
\beta_{2}(X)&=&\rho^{2}|\tau_{p}(\varphi)|^{q-2}|d\varphi|^{p-4}\left\langle\nabla^{\varphi}|\tau_{p}(\varphi)|^{q-2}\tau_{p}(\varphi),
d\varphi\right\rangle h\big(d\varphi(X),\tau_{p}(\varphi)\big),
\end{eqnarray*}
for all $X\in \Gamma(TM)$. The last inequality is equivalent to the following
\begin{eqnarray}\label{equa017}
   0&\geq& -\text{div}\beta_{1}-(p-2)\text{div}\beta_{2} \\
   &&+  \rho^{2}|d\varphi|^{p-2}h\big( \nabla_{e_{i}}^{\varphi}|\tau_{p}(\varphi)|^{q-2}\tau_{p}(\varphi),\nabla_{e_{i}}^{\varphi}|\tau_{p}(\varphi)|^{q-2}\tau_{p}(\varphi)\big) \nonumber\\
   &&+2\rho e_{i}(\rho)|d\varphi|^{p-2}|\tau_{p}(\varphi)|^{q-2}h\big( \nabla_{e_{i}}^{\varphi}|\tau_{p}(\varphi)|^{q-2}\tau_{p}(\varphi),\tau_{p}(\varphi)\big) \nonumber \\
&& +(p-2)\rho^{2}|d\varphi|^{p-4}\left\langle\nabla^{\varphi}|\tau_{p}(\varphi)|^{q-2}\tau_{p}(\varphi),d\varphi\right\rangle h\big(d\varphi(e_{i}),
\nabla_{e_{i}}^{\varphi}|\tau_{p}(\varphi)|^{q-2}\tau_{p}(\varphi)\big) \nonumber\\
   &&+2(p-2)\rho e_{i}(\rho)|d\varphi|^{p-4}|\tau_{p}(\varphi)|^{q-2}
   \left\langle\nabla^{\varphi}|\tau_{p}(\varphi)|^{q-2}\tau_{p}(\varphi),d\varphi\right\rangle
   h\big(d\varphi(e_{i}),\tau_{p}(\varphi)\big).\nonumber
\end{eqnarray}
By using the Young's inequality, we have\\

$-2\rho e_{i}(\rho)|d\varphi|^{p-2}|\tau_{p}(\varphi)|^{q-2}h\big( \nabla_{e_{i}}^{\varphi}|\tau_{p}(\varphi)|^{q-2}\tau_{p}(\varphi),\tau_{p}(\varphi)\big)$
\begin{eqnarray}\label{eq3.8}
   &\leq&\frac{1}{2}\rho^{2}|d\varphi|^{p-2} \big|\nabla_{}^{\varphi}|\tau_{p}(\varphi)|^{q-2}\tau_{p}(\varphi)\big|^2
    +2|d\varphi|^{p-2}|\tau_{p}(\varphi)|^{2(q-1)}|\text{grad}\lambda|^2,\qquad
\end{eqnarray}
and the following\\

$-2\rho e_{i}(\rho)|d\varphi|^{p-4}|\tau_{p}(\varphi)|^{q-2}
   \left\langle\nabla^{\varphi}|\tau_{p}(\varphi)|^{q-2}\tau_{p}(\varphi),d\varphi\right\rangle
   h\big(d\varphi(e_{i}),\tau_{p}(\varphi)\big)$
\begin{eqnarray}\label{eq3.9}
   &\leq&\rho^{2}|d\varphi|^{p-4}\left\langle\nabla^{\varphi}|\tau_{p}(\varphi)|^{q-2}\tau_{p}(\varphi),d\varphi\right\rangle^2
         +|d\varphi|^{p-2}|\tau_{p}(\varphi)|^{2(q-1)}|\text{grad}\lambda|^2.\qquad
\end{eqnarray}
By substituting (\ref{eq3.8}) and (\ref{eq3.9}) in (\ref{equa017}), we find that
\begin{eqnarray}\label{eq3.10}
 p|d\varphi|^{p-2}|\tau_{p}(\varphi)|^{2(q-1)}|\text{grad}\lambda|^2
   &\geq& -\text{div}\beta_{1}-(p-2)\text{div}\beta_{2} \\
   & &\nonumber  +\frac{1}{2}\rho^{2}|d\varphi|^{p-2} \big|\nabla_{}^{\varphi}|\tau_{p}(\varphi)|^{q-2}\tau_{p}(\varphi)\big|^2.
\end{eqnarray}
From inequality (\ref{eq3.10}) and the divergence Theorem (see \cite{baird}), we get
\begin{eqnarray}\label{eq3.11}
 p\int_M|d\varphi|^{p-2}|\tau_{p}(\varphi)|^{2(q-1)}|\text{grad}\lambda|^2v^g
   &\geq&\nonumber \frac{1}{2}\int_M\rho^{2}|d\varphi|^{p-2} \big|\nabla_{}^{\varphi}|\tau_{p}(\varphi)|^{q-2}\tau_{p}(\varphi)\big|^2v^g.\\
\end{eqnarray}
Let $\rho=\rho_R:M\rightarrow [0,1]$ be a smooth cut-off function with $\rho=1$ on $B_{R}(x)$, $\rho= 0$ off $B_{2R}(x)$ and $|\text{grad}\rho|\leq \frac{2}{R}$. By using (\ref{eq3.11}), we obtain
\begin{eqnarray}\label{eq3.12}
 \frac{4p}{R^2}\int_{B_{2R}(x)}|d\varphi|^{p-2}|\tau_{p}(\varphi)|^{2(q-1)}v^g
   &\geq&\nonumber \frac{1}{2}\int_{B_{R}(x)}|d\varphi|^{p-2} \big|\nabla_{}^{\varphi}|\tau_{p}(\varphi)|^{q-2}\tau_{p}(\varphi)\big|^2v^g.\\
\end{eqnarray}
As $\int_{M}|d\varphi|^{p-2}|\tau_{p}(\varphi)|^{2(q-1)}v^g<\infty$ when $R\rightarrow \infty$, from (\ref{eq3.12}) we get
\begin{equation}\label{eq3.13}
    \nabla_{X}^{\varphi}|\tau_{p}(\varphi)|^{q-2}\tau_{p}(\varphi)=0,\quad\forall X\in\Gamma(TM).
\end{equation}
Equation (\ref{eq3.13}) implies that the function $|\tau_{p}(\varphi)|^{2(q-1)}$ is constant on $M$. By assumption $\int_{M}|d\varphi|^{p-2}v^g=\infty$, we conclude that $\tau_{p}(\varphi)=0$, that is the map $\varphi$ is $p$-harmonic.
\end{proof}

\end{document}